\providecommand{\U}[1]{\protect\rule{.1in}{.1in}}
\newtheorem {theorem}{Theorem}
\newtheorem {conclusion}[theorem]{Conclusion}
\newtheorem {proposition}[theorem]{Proposition}
\newtheorem {remark}[theorem]{Remark}
\newenvironment {proof}[1][Proof]{\noindent \textbf {#1.} }{\ \rule {0.5em}{0.5em}}
\newtheorem {rmk}{Remark}
\newtheorem {prf}{Proof}
\begin{document}

\setlength{\textwidth}{6.6in}
\setlength{\topmargin}{-0.9in}
\setlength{\textheight}{1.20\textheight}
\setlength{\oddsidemargin}{0.25in}
\setlength{\evensidemargin}{-0.25in}

\newcommand{\NOT}[1]{}
\newcommand{\pa}{\par\medskip}
\newcommand{\eps}{\varepsilon}

\title{\textbf{Accumulated Random Distances in High Dimensions -- Ways of Calculation}}
\author{Eliahu Levy\\Department of Mathematics\\Technion -- Israel Institute of Technology\\Technion City, Haifa 3200003, Israel\\(eliahu@math.technion.ac.il)}
\date{March 24, 2020}

\maketitle

\begin{abstract}
In this note we refine and improve some of the calculations in our 2019 article with Yair Censor (Applied Mathematics and Optimization, accepted for publication) where an analysis of the superiorization method is made
via the principle of concentration of measure. Some paragraphs there are repeated here for the sake of completeness. Yet, for the case of accumulating `steps' on the sphere, reference to distances as done there is replaced by reference to the angles, which makes simpler expressions. The treatment here of the action of a random transformation is also rather `cleaner'. For some standard deviations, precise inequalities are here obtained rather then just
$O$-expressions. Some further settings are mentioned, of no direct interest as per the latter article, showing results that similar calculations yield.
\end{abstract}

\section{Concentration of Measure in High Dimensional Euclidean
Spaces}
Concentration of (Probability) Measure is the phenomenon that probability is highly concentrated near one value,
thus near the expectation.
\pa
A classical example is \emph{The Law of Large Numbers}, stating the concentration of measure of the sum (or average) of many i.i.d.\ stochastic variables.
\pa
We will focus on peculiar concentration of measure phenomena in $N$-dimensional Euclidean
spaces when $N$ is large.
\pa
To see, as an example, why Euclidean spaces $\mathbb{R}^N$ of high dimension $N$ should lead to concentration of measure phenomena, consider the unit sphere in $\mathbb{R}^N$.
\pa
`Partitioning' it into layers in parallel hyperplanes orthogonal to some fixed unit vector, the layer distanced $t$ from the central hyperplane through $0$ is a sphere of one less dimension with radius $\sqrt{1-t^2}$. Its $N-2$-`area' will be proportional to the $N-2$ power of its radius, hence to $(1-t^2)^{(N-2)/2}$.
\pa
And for a vector uniformly distributed on the sphere, the probability for distance $t$ from the central hyperplane will have density proportional to that `$N-2$-area' multiplied by $(1-t^2)^{-1/2}$ (-- the latter due to the need to project $dt$ onto the sphere), thus to $(1-t^2)^{(N-3)/2}$.
\pa
$N$ being `big', if $1-t^2$ is even slightly less than $1$, the `big' exponent $N-3$ will make
$(1-t^2)^{(N-3)/2}$ small. Indeed, $(1-t^2)^{(N-3)/2}$, when not negligible, is approximately $e^{-Nt^2/2}$, significant only when $t=O(1/\sqrt{N})$.
\pa
So we have the somewhat strange-sounding fact (later we'll also derive that differently):
\pa
\textbf{For two vectors in $\mathbb{R}^N$ with independent uniformly distributed directions, it is highly unlikely that the angle between them is far from $90^\circ$. In fact, the deviation from $90^\circ$ is with high probability $O(1/\sqrt{N})$}.

\section{Using the Normal Distribution} \label{sc:ConcMeas}
Yet a main vehicle in studying $\mathbb{R}^N$ is the \emph{Normal Distribution}. (Indeed, as we saw above, our $(1-t^2)^{(N-3)/2}$  above `ended' as approximately `normal', as the Central Limit Theorem would make us expect.)
\pa
The standard normal distribution on $\mathbb{R}$,\quad $\mathcal{N}(0,1)$, has distribution
$$\dfrac1{\sqrt{2\pi}}e^{-\frac12 x^2}\,dx.$$
\pa
Thus in the Euclidean $\mathbb{R}^N$, i.e.\ endowed with $\ell^2$ norm, letting the $N$ coordinates to be independent identically distributed $\sim\mathcal{N}(0,1)$ will give the distribution on $\mathbb{R}^N$,
$$(2\pi)^{-N/2}e^{-\frac12\|x\|^2}\,dx_1\ldots\,dx_N.$$
\pa
hence \emph{invariant under all linear orthogonal transformations of $\mathbb{R}^N$}.
\pa
In particular any projections of it on $N$ orthogonal unit vectors are also $N$ independent $\sim\mathcal{N}(0,1)$.
\pa
To see what one may obtain thus, for our above $x=(x_1,\ldots,x_n)$ with coordinates i.i.d.\ $\sim\mathcal{N}(0,1)$, we have, by the orthogonal invariance,
\pa
\textbf{For any fixed unit vector $u\in\mathbb{R}^N$, $u\cdot x$ is $\sim\mathcal{N}(0,1)$}
\pa
On the other hand, $\|x\|^2=x_1^2+\ldots+x_N^2$. These squares are still i.i.d, therefore, as always
with sums of i.i.d., $\|x\|^2$ has variance $N$ times the variance of the square of a $\sim\mathcal{N}(0,1)$, hence has standard deviation $\sqrt{N}$ times the standard deviation of the square a $\sim\mathcal{N}(0,1)$ -- some number independent of $N$.
\pa
And, of course, the expectation of $\|x\|^2$ is $N$ (because for a single coordinate it is the variance of a $\sim\mathcal{N}(0,1)$ itself = $1$).
\pa
So \emph{$\|x\|^2$ is highly concentrated near its expectation $N$} -- with relative error $O(1/\sqrt{N})$, hence \emph{$\|x\|$ is highly concentrated near $\sqrt{N}$}.
\pa
Thus, speaking somewhat loosely, for the above $x$ with i.i.d.\ $\sim\mathcal{N}(0,1)$ coordinates, \emph{$\left(1/\sqrt{N}\right)x$ will rarely deviate from $1$.}
\pa
And of course, by the orthogonal invariance, $x/\|x\|$ must is distributed uniformly on the unit sphere.
\emph{So we have here a way to get this uniform distribution}.
\pa
But $x/\|x\|$ is rarely much different from $\left(1/\sqrt{N}\right)x$, thus the latter (Gaussian!) distribution may also serve as an approximation to the uniform distribution on the unit sphere -- the unit point `wanders' along the radius, but `rarely more than $O(1/\sqrt{N})$'.
\pa
And we can deduce the above fact about the angle between vectors:
\pa
The Gaussian $\left(1/\sqrt{N}\right)x$ differs from $x/\|x\|$, which gives the uniform distribution on the unit sphere, just by a relative $O(1/\sqrt{N})$ `perturbation' of the vector.
\pa
The cosine of its angle with a fixed unit vector $u$ is $\left(1/\sqrt{N}\right)(u\cdot x)$ and $(u\cdot x)$ is $\sim\mathcal{N}(0,1)$, making that cosine (= the sine of the difference with $90^\circ$) approximately $\sim\left(1/\sqrt{N}\right)\mathcal{N}(0,1)$.

\section{The norm of the sum of vectors with given norms\label{sc:Sum}}
As a further example of the use of the normal distribution, suppose we are given $M$ vectors $y_1,y_2,\ldots,y_M$ of known norms $d_1,d_2,\ldots.d_M$ in a high-dimensional $E^{N}$. What should we expect the norm of their sum to be?
\pa
That can be answered: take the direction of each of them distributed uniformly on $S^{N-1}$, \emph{even conditioned on fixed valued for the others}. In other words, take them independent, each with direction distributed uniformly. This
can be constructed by taking random $M$ vectors in $E^{N}$ (that is, a random $M\times N$ matrix), with entries i.i.d.\ $\sim{\mathcal{N}}$, dividing them by $\sqrt{N}$, then by their norm (now highly concentrated near $1$)
and then multiplying them (i.e.\ the the columns of the matrix), by
by $d_1,d_2,\ldots,d_M,$ respectively.\pa

Then the sum $\sum_{i=1}^{M}y_i$, if we ignore the division by the norm, is $1/\sqrt{N}$ times the random matrix applied to the vector $(d_1,d_2,\ldots,d_{M})$.\pa

But the distribution of the random matrix is invariant with respect to any transformation which is orthogonal with respect to the Hilbert-Schmidt norm -- the square root of the sum of squares of the entries
(i.e.,\ $\Vert T\Vert_{HS}:=\sqrt{\operatorname*{tr}(T^{\prime}\cdot T)}=
\sqrt{\operatorname*{tr}(T\cdot T^{\prime})}$,\, \, $T^{\prime}$ denoting the transpose and $\operatorname*{tr}$ standing for the trace, see, e.g., \cite{bell16}).\pa

In particular, the distribution of the sum is the same as that of $1/\sqrt{N}$ times $\sqrt{d_1^2+d_2^2+\cdots+d_M^2}$ times the random matrix applied to $(1,0,\ldots,0)$, which is, of course, distributed with independent $\sim{\mathcal{N}}$ entries, thus, with norm concentrated near $\sqrt{N}$. (With relative deviation $O(1/\sqrt{N})$.)
This leads to the following conclusion.\pa

\begin{conclusion}\label{conc:SpaceVectors}
\label{conc:Sum} For $M$ vectors $y_1,y_2,\ldots,y_M$ of known norms $d_1,d_2,\ldots.d_M$, in $E^{N}$, (and taking their directions as random, distributed independently uniformly), we have that $\Vert\sum_{i=1}^{M}y_{i}\Vert$ is near $\sqrt{d_1^2+d_2^2+\cdots+d_M^2}$ with almost full probability (With relative deviation $O(1/\sqrt{N})$.)
\end{conclusion}

We shall return to that in \S{\ref{sc:MarkovFlat}}.

\section{The accumulation of given distances on the unit sphere\label{sc:Sphere}}
Now ask a similar question \emph{on the unit sphere}: what should we expect the angle of a chord made of $M$ chords, \emph{the differences between consecutive elements in a sequence of points on the unit sphere $S^{N-1}\subset E^{N}$}, of given angles $\theta_1,\theta_2,\ldots.\theta_M$.
\pa
Note that here the angles are the natural `distances'. Indeed they naturally measure the distance on geodesics
 -- great circles.
\pa
Denote by $\omega_{N-1}$ the normalized to be probability (i.e.,\ of total mass $1$) uniform measure on $S^{N-1}$.

\begin{remark}
By symmetry, for $x=(x_1,x_2,\ldots,x_N)\in S^{N-1}$, $\int x_{k}%
^2\,d\omega_{N-1}$ is the same for all $k$. Of course, their sum is
$\int1\,d\omega_{N-1}=1$. Therefore,
\begin{equation}
\int x_{k}^2\,d\omega_{N-1}=\frac1{N},\qquad k=1,2,\ldots,N.
\label{eq:integral}%
\end{equation}
Hence, for a polynomial of degree $\leq2$ on $E^{n}$:
\begin{equation}
p(x)=\langle Qx,x\rangle+2\langle a,x\rangle+\gamma, \label{eq:poly}%
\end{equation}
where $Q$ is a symmetric $N\times N$ matrix, $a\in E^{N}$ and $\gamma\in E$,
we will have
\begin{equation}
\int p(x)\,d\omega_{N-1}=\dfrac1{N}\operatorname*{tr}\,Q+\gamma.
\end{equation}

\end{remark}

Now, for some fixed $0\leq \theta\leq\pi$, the set of points in $S^{N-1}$ of
angular distance $\theta$ from some fixed vector $u\in S^{N-1}$ is the $(N-2)$-sphere
$\subset S^{N-1},$ $\Sigma(u,\theta)$ given by
\begin{equation}
\Sigma(u,\theta):=\cos\theta\cdot u+\sin\theta\cdot{S^{N-2}}_{u^{\bot}},\label{eq:sphere}%
\end{equation}
where ${S^{N-2}}_{u^{\bot}}$ stands for the unit sphere in the hyperplane
perpendicular to $u.$
\pa
In our scenario, one performs a \emph{Markov chain}$,$
see, e.g., \cite{markov2000}\emph{.} Starting from a point $u_0$ on
$S^{N-1}$, move to a point $u_1\in\Sigma(u_0,\theta_1)$ uniformly
distributed there. Then, from that $u_1$, to a point $u_2\in\Sigma
(u_1,\theta_2)$ uniformly distributed there, and so on, until one ends with
$u_M$. We would like to find the expected cosine of the angle between $u_0$ and
$u_M$, namely
$\mathbb{E}\left[\langle u_M,u_0\rangle\right]$.
\pa
If we denote by ${\mathcal{L}}_\theta$ the operator mapping a function $p$ on
$S^{N-1}$ to the function whose value at a vector $u\in S^{N-1}$ is the
average of $p$ on $\Sigma(u,\theta)$, then ${\mathcal{L}}_{\theta_k}(p)$ evaluated at
$u$ is the expectation of $p$ at the point to which $u$ moved in the $k$-th
step above. Hence, in the above Markov chain, the expectation of $p(u_M)$
is
\begin{equation}
\left({\mathcal{L}}_{\theta_M}{\mathcal{L}}_{\theta_{M-1}}\cdots{\mathcal{L}}_{\theta_1}(p)\right)(u_0).
\end{equation}
Thus, what we are interested in is
\begin{equation}
\mathbb{E}\left[\langle u_M,u_{0}\rangle\right]  =
{\mathcal{L}}_{\theta_M}\mathcal{L}_{\theta_{M-1}}\cdots\mathcal{L}_{\theta_1}(\langle x,u_0\rangle)\Big\vert_{x=u_0}.
\end{equation}
\pa
So, let us calculate ${\mathcal{L}}_{\theta}(p)$ for polynomials of degree $\leq2$
as in (0\ref{eq:poly}). In performing the calculation, assume
$u=(1,0,\ldots,0)$. For $x=(x_1,x_2,\ldots,x_N)\in E^{N}$ write
$y=(x_2,x_3,\ldots,x_N)\in E^{N-1}$. In (\ref{eq:poly}) write $a=(a_1,b)$ where
$b=(a_2,a_3,\ldots a_N)\in E^{N-1}$ and
\begin{equation}
Q=\left(
\begin{array}
[c]{cc}%
\eta & c^{\prime}\\
c & Q_1%
\end{array}
\right)  ,
\end{equation}
where $Q_1$ is a symmetric $(N-1)\times(N-1)$ matrix, $c\in E^{N-1}$
and $\eta\in E$. Note that for our $u=(1,0,\ldots,0)$,\,
\, \,  $a_1=\langle a,u\rangle$,\, \,
\, \ $\eta=\langle Qu,u\rangle$ and $\operatorname*{tr}\,Q_1=
\operatorname*{tr}\,Q-\eta=\operatorname*{tr}\,Q-\langle Qu,u\rangle$.
\pa
Then, for $p$ as in in (\ref{eq:poly}),
\begin{equation}
p(x)=\eta x_1^2+2x_1\langle c,y\rangle+\langle Q_1y,y\rangle
+2a_1x_1+2\langle b,y\rangle+\gamma.
\end{equation}
Hence, taking account of (\ref{eq:sphere}) for $u=(1,0,\ldots,0)$, and using
(\ref{eq:integral}),
\begin{eqnarray*}
&& \left({\mathcal{L}}_\theta p\right)(u)=\left({\mathcal{L}}_\theta p\right)(1,0,\ldots,0)=\\
&& =\cos^2\theta\cdot\eta+\frac1{N-1}\sin^2\theta\,\operatorname*{tr}\,Q_1+
2\cos\theta\,a_1+\gamma=\\
&& =\cos^2\theta\,\langle Qu,u\rangle+
\frac1{N-1}\sin^2\theta\,\left(\operatorname*{tr}Q-\langle Qu,u\rangle\right)
 +2\cos\theta\,\langle a,u\rangle+\gamma=\\
&& =\left(\cos^2\theta-\frac1{N-1}\sin^2\theta\right)\langle Qu,u\rangle+
 \frac1{N-1}\sin^2\theta\operatorname*{tr}Q+
 2\cos\theta\,\langle a,u\rangle+\gamma.
\end{eqnarray*}
which, by symmetry, will hold for any $u\in S^{N-1}$. In particular, we find,
as should be expected, that
\begin{eqnarray*}
&& \int(\mathcal{L}_\theta(p))(x)\,d\omega_{N-1}\\
&& =\frac1{N}\left(\cos^2\theta-\frac1{N-1}\sin^2\theta\right)\,\operatorname*{tr}\,Q+
\frac1{N-1}\sin^2\theta\operatorname*{tr}\,Q+\gamma\\
&&  =\frac1{N}\operatorname*{tr}\,Q+\gamma=\int p(x)\,d\omega_{N-1}.
\end{eqnarray*}
We are interested, for some fixed $u\in S^{N-1}$, in
\begin{equation}
p(x)=\langle u,x\rangle.
\end{equation}
Then there is no $Q$ term, so one has
\begin{equation}
\Big({\mathcal{L}}_\theta\big(\langle u,x\rangle\big)\Big)(u)=
\cos\theta\cdot\langle u,x\rangle.
\end{equation}
Consequently,
\begin{eqnarray*}
&&  \mathbb{E}\left[\langle u_M,u_0\rangle\right] =
\left({\mathcal{L}}_{\theta_{M}}{\mathcal{L}}_{\theta_{M-1}}\cdots{\mathcal{L}}_{\theta_1}
(\langle x,u_0\rangle)\right)\vert_{x=u_0}=\\
&& =\prod_{i=1}^M\cos\theta_i\cdot(\langle u_0,x\rangle)\Big\vert_{x=u_0}=
\prod_{i=1}^{M}\cos\theta_i.
\end{eqnarray*}
\pa
We also assess the standard deviation, which is
\begin{equation}
\sigma=\sqrt{\left({\mathcal{L}}_{\theta_{M}}{\mathcal{L}}_{\theta_{M-1}}\cdots
{\mathcal{L}}_{\theta_1}\right)(\langle u_0,x\rangle^2)\Big\vert_{x=u_0}
-\left(\prod_{i=1}^M\cos\theta_i\,\right)^2}.
\end{equation}
Here $p(x)=\langle a,x\rangle^2$, so there is only the $Q$ term with
$Q(x):=\langle a,x\rangle^2$. Then $\operatorname*{tr}\,Q=\Vert a\Vert^2$,
and we find
\begin{eqnarray*}
&& \Big({\mathcal{L}}_{\theta}\big(\langle a,x\rangle^2\big)\Big)(u)=\\
&& =\left(\cos^2\theta-\frac1{N-1}\sin^2\theta\right)\langle a,u\rangle^2+
\frac1{N-1}\sin^2\theta\,\Vert a\Vert^2.
\end{eqnarray*}
Consequently, for $a=u_0$ (note $\Vert u_0\Vert^2=1$),
\begin{eqnarray*}
&& \sigma^2=\left({\mathcal{L}}_{\theta_M}{\mathcal{L}}_{\theta_{M-1}}\cdots
{\mathcal{L}}_{\theta_1}\right)(\langle u_0,x\rangle^2)\Big\vert_{x=u_0}-
\left(\prod_{i=1}^M\cos\theta_i\right)^2=\\
&& =-\left(\prod_{i=1}^M\cos\theta_i\right)^2+
\prod_{i=1}^M\left(\cos^2\theta_i-\frac1{N-1}\sin^2\theta_i\right)\\
&& +\frac1{N-1}\left[\sin^2\theta_1
+\sin^2\theta_2\left(\cos^2\theta_1-\frac1{N-1}\sin^2\theta_1\right)\right.\\
&& +\sin^2\theta_3\left(\cos^2\theta_2+\frac1{N-1}\sin^2\theta_2\right)
\left(\cos^2\theta_1-\frac1{N-1}\sin^2\theta_1\right)\\
&& \left.+\cdots+\sin^2\theta_M\prod_{i=1}^{M-1}\left(\cos^2\theta_i-\frac1{N-1}\sin^2\theta_i\right)\right],
\end{eqnarray*}
But,
\begin{eqnarray*}
&& \prod_{i=1}^M\left(\cos^2\theta_i-\frac1{N-1}\sin^2\theta_i\right)\\
&& = -\frac1{N-1}\sin^2\theta_M\cdot\prod_{i=1}^{M-1}\left(\cos^2\theta_i-\frac1{N-1}\sin^2\theta_i\right)\\
&& -\cos^2\theta_M\cdot\frac1{N-1}\sin^2\theta_{M-1}
\cdot\prod_{i=1}^{M-2}\left(\cos^2\theta_i-\frac1{N-1}\sin^2\theta_i\right)\\
&& -\cos^2\theta_M\cos^2\theta_{M-1}\cdot\frac1{N-1}\sin^2\theta_{M-2}
\cdot\prod_{i=1}^{M-3}\left(\cos^2\theta_i-\frac1{N-1}\sin^2\theta_i\right)\\
&& - \cdots - \prod_{i=3}^M\cos^2\theta_i\cdot\frac1{N-1}\sin^2\theta_2
\left(\cos^2\theta_1-\frac1{N-1}\sin^2\theta_1\right)\\
&& - \prod_{i=2}^M\cos^2\theta_i\cdot\frac1{N-1}\sin^2\theta_1 + \prod_{i=1}^M\cos^2\theta_i.
\end{eqnarray*}
So we find,
\begin{eqnarray*}
&& \sigma^2=\frac1{N-1}\left[\sin^2\theta_{M-1}\left(1-\cos^2\theta_M\right)
\prod_{i=1}^{M-2}\left(\cos^2\theta_i-\frac1{N-1}\sin^2\theta_i\right)\right.\\
&&+\sin^2\theta_{M-2}\left(1-\cos^2\theta_M\cos^2\theta_{M-1}\right)
\cdot\prod_{i=1}^{M-3}\left(\cos^2\theta_i-\frac1{N-1}\sin^2\theta_i\right)\\
&& +\cdots+\sin^2\theta_2\left(1-\prod_{i=3}^M\cos^2\theta_i\right)
\left(\cos^2\theta_1-\frac1{N-1}\sin^2\theta_1\right)\\
&& \left.+\sin^2\theta_1\left(1-\prod_{i=2}^M\cos^2\theta_i\right)\right].
\end{eqnarray*}

So $\sigma\le\frac1{\sqrt{N-1}}\Vert sin^2\theta_1,\sin^2\theta_2,\ldots,\sin\theta_{M-1}\Vert_2$.

\begin{conclusion}
\label{conc:SphereVectors} The cosine of angle $\theta$ of a chord made by $M$ chords of
given angles $\theta_1,\theta_2,\ldots,\theta_M$, between consecutive elements in a sequence of points on the unit sphere $S^{N-1}\subset E^{N}$, modeled by the above Markov chain, is with almost full probability, near%
\begin{equation}\label{eq:SphereVectors}
\prod_{i=1}^{M}\cos\theta_i.
\end{equation}
(With deviation $O\left((1/\sqrt{N})\cdot\Vert(\sin\theta_1,\sin\theta_2,\ldots,\sin\theta_{M-1})\Vert_2\right)$.)
\end{conclusion}

\section{What does (\ref{eq:SphereVectors}) tell us?}\label{sc:SphereVectors}
Firstly, we conclude, rather surprisingly, that if all $\theta_i$ were \emph{acute} angles -- between $0$ and $90^{\circ}$, i.e.\ with nonnegative cosines, then invariably \emph{also the accumulated angle is expected to be acute, no matter what $M$ -- the number of $\theta_i$'s is!}
\pa
Secondly, since multiplying by a cosine always does not increase the absolute value, the product in
(\ref{eq:SphereVectors}), giving the expected cosine of the accumulated angle, would tend to be small, meaning angle near $90^{\circ}$.
\pa
Moreover, if \emph{one} of the $\theta_1$ was $90^{\circ}$, the resulting expected $\theta$ is invariably $90^{\circ}$!
\pa
Speaking somewhat floridly: $90^{\circ}$ turns out to be both an impassible barrier and a forceful attractor!
\pa
Yet the wonder might subside if one recalls what we had found before: that anyhow it is highly unlikely for an angle to be far from $90^{\circ}$, thus `one should expect any `nudge' to push it into that dominating realm and if already there to remain there'.
\pa
Note also that for a tiny area on a big sphere, (\ref{eq:SphereVectors}) agrees with the `flat' case \S\ref{sc:Sum}: then the $\theta$'s are small, $\cos\theta\sim 1-\frac12\theta^2$ and multiplying these corresponds approximately to adding the $\theta^2$'s.

\section{As an Aside, Let's do the Markov Chain also for the `Flat' Case}
\label{sc:MarkovFlat}
We inquire about the norm of the sum $y$ of $M$ vectors $y_1,y_2,\ldots,y_M$ of known norms
$d_1,d_2,\ldots.d_M$ in the Euclidean $\mathbb{R}^{N}$. Their directions taken as distributed uniformly and independently.
\pa
Then, as before, the sum $y$ is the result of a Markov chain: Starting from $x_0=0$,
move to a point $x_1\in\Sigma(u_0,d_1)$ ($\Sigma(x,d)$ here denoting the sphere with center $x$ and
radius $d$), uniformly distributed there, then from that $x_1$ to a point $x_2\in\Sigma(x_1,d_2)$ uniformly distributed there, and so on, until one ends with $y=x_M$. And we wish to find
$\mathbb{E}\left[\Vert x_M-x_0\Vert^2\right]$.
\pa
As before, denote by ${\mathcal{L}}_d$ the operator mapping a function $p$ on $\mathbb{R}^{N}$ to the
function whose value at a vector $x\in\mathbb{R}^{N}$ is the average of $p$ on $\Sigma(u,\xi)$, then, in
our Markov chain, the expectation of $p(x_M)$ will be
\begin{equation}
\left({\mathcal{L}}_{d_M}{\mathcal{L}}_{d_{M-1}}\cdots{\mathcal{L}}_{d_1}(p)\right)(x_0).
\end{equation}
We are interested in
\begin{equation}
\mathbb{E}\left[\Vert x_M-x_0\Vert^2\right]  =
{\mathcal{L}}_{d_M}\mathcal{L}_{d_{M-1}}\cdots\mathcal{L}_{d_1}
(\Vert x-x_0\Vert^2)\Big\vert_{x=x_0},
\end{equation}
\pa
and will also be concerned with the standard deviation.
\pa
(This necessitates finding ${\mathcal{L}}_d(p)$ for $p$ polynomials of degree $\leq4$. While for general such $p$ the integrals of monomials over $S^{N-1}$ such as those calculated in appendix (\ref{sc:IntMon}) might be used, in our case these will not be needed.)
\pa
But note that for $a=(a_1,0,\ldots,0)$,\,\,
$\int\langle a,x\rangle^2\,d\omega_{N-1}=\int a_1^2x_1^2\,d\omega_{N-1}$,
i.e.,
$$\int\langle a,x\rangle^2\,d\omega_{N-1}=\frac1{N}a_1^2=\frac1{N}\Vert a\Vert^2,$$
which by symmetry holds for any $a$.
\pa
Now, the value of ${\mathcal{L}}_d(p)$ at $x_0$, the average of $p$ over $\Sigma(x_0,d)$ -- the sphere of radius $d$
around $x_0$, will be the average of $p(x-x_0)$ over $\Sigma(0,d)$ -- the sphere of radius $d$ around the origin.
\pa
In our case $p(x):=\Vert x\Vert^2$, and
$$p(x-x_0)=\Vert x\Vert^2+\Vert x_0\Vert^2-2\langle x_0,x\rangle.$$
whose average on $\Sigma(0,d)$ is $d^2+\Vert x_0\Vert^2$. So we have
$$\left({\mathcal{L}}_d(\Vert x\Vert^2)\right)\Big\vert_{x=x_0}=\Vert x_0\Vert^2+d^2.$$
Therefore
\begin{eqnarray*}
&& \mathbb{E}\left[\Vert x_M\Vert^2\right]=
{\mathcal{L}}_{d_M}\mathcal{L}_{d_{M-1}}\cdots\mathcal{L}_{d_1}
(\Vert x\Vert^2)\Big\vert_{x=0}=\\
&& \left(\Vert x\Vert^2+(d_1^2+d_2^2+\ldots d_M^2)\right)\Big\vert_{x=0}=
d_1^2+d_2^2+\ldots d_M^2.
\end{eqnarray*}
As for the standard deviation, it is given by
\begin{eqnarray*}
\sigma^2=&& \mathbb{E}\left[\Vert x_M\Vert^4\right]-\left(d_1^2+d_2^2+\ldots d_M^2\right)^2=\\
&& {\mathcal{L}}_{d_M}\mathcal{L}_{d_{M-1}}\cdots\mathcal{L}_{d_1}\left(\Vert x\Vert^4\right)\Big\vert_{x=0}
-\left(d_1^2+d_2^2+\ldots d_M^2\right)^2
\end{eqnarray*}
For $p(x):=\Vert x_M\Vert^4$ we have
\begin{eqnarray*}
&& p(x-x_0)=\left(\Vert x\Vert^2+\Vert x_0\Vert^2-2\langle x_0,x\rangle\right)^2=\\
&& \Vert x\Vert^4+\Vert x_0\Vert^4+4\langle x_0,x\rangle^2+2\Vert x_0\Vert^2\cdot\Vert x\Vert^2-
4\Vert x_0\Vert^2\langle x_0,x\rangle-4\Vert x\Vert^2\cdot\langle x_0,x\rangle.
\end{eqnarray*}
Its average on $\Sigma(0,d)$, which is $\left({\mathcal{L}}_d(p)\right)(x_0)$, will be
\begin{eqnarray*}
&& d^4+\Vert x_0\Vert^4+4\frac1{N}d^2\Vert x_0\Vert^2+2d^2\Vert x_0\Vert^2\\
&& = \Vert x_0\Vert^4+\left(2+4\frac1{N}\right)d^2\Vert x_0\Vert^2+d^4.
\end{eqnarray*}
Thus,
\begin{eqnarray*}
&& \Vert x_0\Vert^4\to_{{\mathcal{L}}_{d_1}}d_1^4
+\left(2+4\frac1{N}\right)d_1^2\Vert x\Vert^2+\Vert x\Vert^4\\
&& \to_{{\mathcal{L}}_{d_2}}d_1^4+d_2^4
+\left(2+4\frac1{N}\right)d_1^2\left(d_2^2+\Vert x\Vert^2\right)
+\left(2+4\frac1{N}\right)d_2^2\Vert x\Vert^2+\Vert x\Vert^4\\
&& \to_{{\mathcal{L}}_{d_3}}d_1^4+d_2^4+d_3^4
+\left(2+4\frac1{N}\right)d_1^2\left(d_2^2+d_3^2+\Vert x\Vert^2\right)\\
&& +\left(2+4\frac1{N}\right)d_2^2\left(d_3^2+\Vert x\Vert^2\right)
+\left(2+4\frac1{N}\right)d_3^2\Vert x\Vert^2+\Vert x\Vert^4
\to_{{\mathcal{L}}_{d_3}}\cdots\\
&& \to_{{\mathcal{L}}_{d_M}}d_1^4+d_2^4+\cdots
+d_M^4+\left(2+4\frac1{N}\right)\sum_{i<j}d_i^2d_j^2\\
&& +\left(2+4\frac1{N}\right)\left(d_1^2+\cdots+d_M^2\right)\Vert x\Vert^2+\Vert x\Vert^4.
\end{eqnarray*}
And
\begin{eqnarray*}
&& \sigma^2=\mathbb{E}\left[\Vert x_M\Vert^4\right]-\left(d_1^2+d_2^2+\ldots d_M^2\right)^2=\\
&& {\mathcal{L}}_{d_M}\mathcal{L}_{d_{M-1}}\cdots\mathcal{L}_{d_1}\left(\Vert x\Vert^2\right)\Big\vert_{x=0}
-\left(d_1^2+d_2^2+\ldots+d_M^2\right)^2=\\
&& -\left(d_1^2+d_2^2+\ldots+d_M^2\right)^2+
d_1^4+d_2^4+\cdots+d_M^4+\left(2+4\frac1{N}\right)\sum_{i<j}d_i^2d_j^2\\
&& =4\frac1{N}\sum_{i<j}d_i^2d_j^2.
\end{eqnarray*}
Thus, for any $\alpha\ge0$,
\begin{eqnarray*}
&& \sigma^2=4\frac1{N}\sum_{i<j}d_i^2d_j^2\le 4\frac1{N}\sum_{i<j}\left(d_i^2d_j^2+\alpha\left(d_i^2-d_j^2\right)^2\right)=\\
&& 4\frac{N-1}{N}\alpha\left(d_1^2+d_2^2+\ldots+d_N^2\right)^2-8\frac{N-1}{N}\alpha\sum_{i<j}d_i^2d_j^2
+4\frac1{N}\left(1-2\alpha\right)\sum_{i<j}d_i^2d_j^2=\\
&& 4\frac{N-1}{N}\alpha\left(d_1^2+d_2^2+\ldots+d_N^2\right)^2
+\left(\frac4{N}-8\alpha\right)\sum_{i<j}d_i^2d_j^2
\end{eqnarray*}
So take $\alpha=\frac1{2N}$ which will annul the last term, to find
$$\sigma\le\frac{\sqrt{2(N-1)}}{N}\left(d_1^2+d_2^2+\ldots+d_N^2\right),$$
in accordance with Conclusion \ref{conc:SpaceVectors}.

\section{The action of a `Random' Symmetric linear operator With Given Eigenvalues in a High-Dimensional
Space\label{sc:LinOp}}

Consider an $N\times N$ symmetric matrix $A$ with given eigenvalues $s_1,s_2,\ldots,s_N$
. This means that
$$A=U^{-1}\operatorname*{diag}(s_1,s_2,\ldots,s_N)U$$
where $U$ is orthogonal, which we take random with uniform distribution.
\pa
Let $A$ act on a fixed unit vector $u$. One gets a distribution for $Tu$. Note that for a fixed orthogonal $V$ which
fixes $u$, replacing the random $U$ by $UV^{-1}$ does not change the distribution, but replaces $Tu$ by $VTu$.
Therefore \textbf{the distribution of $Tu$ is invariant under action by any orthogonal $V$ which fixes $u$}.
\pa
This means that that distribution will be determined if we know the distribution of $\Vert Tu\Vert$ and
$\cos\angle(Tu,u)=\langle Tu, u\rangle/\Vert Tu\Vert\Vert u\Vert$.
\pa
As for $\Vert Tu\Vert$, it is $\Vert U'SUu\Vert$ where
$S:=\operatorname*{diag}(s_1,s_2,\ldots,s_N)$,\,\,
$Uu$ uniformly distributed on the unit sphere.
\pa
By Section \ref{sc:ConcMeas}, that would be almost as $S$ applied to
$(1/\sqrt{N})x$,\, \, $x$ with coordinates i.i.d.\ $\sim
{\mathcal{N}}$, which is, of course, a vector with independent coordinates but
the $j$-th coordinate distributed as $(1/\sqrt{N})s_j$ times ${\mathcal{N}}$.
\pa
And, similarly to what we had in Section \ref{sc:ConcMeas}, the square of
the norm of $S\cdot(1/\sqrt{N})x$, which is $(1/N)\sum_{j=1}^{N}s_{j}^2%
x_{j}^2$ has mean
\begin{equation}
(1/N)\sum_{j=1}^{N}s_j^2=\left(\Vert(s_1,s_2,\ldots,s_N)\Vert
_2^{(\pi)}\right)^2,
\end{equation}
around which it is concentrated -- its standard deviation being
\begin{equation}
\sigma_0\cdot\sqrt{(1/N^2)\sum_{j=1}^{N}s_{j}^{4}}=(1/\sqrt{N})\sigma_0
\cdot\left(\Vert(s_1,s_2,\ldots,s_N)\Vert_{4}^{(\pi)}\right)^2,
\end{equation}
where $\sigma_0$ is the standard deviation for $x^2$ when $x\sim{\mathcal{N}}%
$, namely,
\begin{equation}
\sigma_0=\sqrt{\frac1{\sqrt{2\pi}}\int(x^2-1)^2\operatorname{exp}%
(-\textstyle{\frac12}x^2)\,dx}=\sqrt2.
\end{equation}
By \ref{sc:Norm}, the relative deviation is, thus, expected, with
almost full probability, to be $O(1/\sqrt{N})$.\pa

Note that since $A=U^{-1}\cdot\operatorname*{diag}(s_1,s_2,\ldots,s_N)\cdot U$, the value
around which that norm $\Vert Tu\Vert$ is concentrated would be
\begin{equation}
\Vert(s_1,s_2,\ldots,s_N)\Vert_2^{(\pi)}=
(1/\sqrt{N})\Vert S\Vert_{HS}=(1/\sqrt{N})\Vert A\Vert_{HS}.
\end{equation}

As for $\langle Tu,u\rangle$, it is $\langle SUu,Uu\rangle$, $Uu$ distributed uniformly.
And, as we did above, replace $Uu$ by $(1/\sqrt{N})x$,\, \, $x$ with coordinates i.i.d.\
$\sim{\mathcal{N}}$,

\begin{equation}
(1/N)\langle Sx,x\rangle=(1/N)\sum_{j=1}^{N}s_jx_j^2, \label{eq:S}%
\end{equation}
which has mean $(1/N)\sum_{j=1}^{N}s_j=(1/N)\operatorname*{tr}\,A$ and
$(1/\sqrt{N})\sigma\Vert(s_1,\ldots,s_N)\Vert_2^{(\pi)}$ is its standard
deviation. Of course, if $A$ is positive semidefinite then the $s_{\ell}\geq0$
and the above mean is $\Vert(s_1,s_2,\ldots,s_N)\Vert_1^{(\pi)}$. This
leads to the following conclusion.

\begin{conclusion}\label{conc:ActMat}
An $N\times N$ symmetric matrix $A$ with given eigenvalues
$s_1,s_2,\ldots,s_N$, acting on a high-dimensional $E^{N}$, would be
expected to multiply the norm of a fixed vector $u$, with almost full probability, by
\begin{equation}
\Vert(s_1,s_2,\ldots,s_N)\Vert_2^{(\pi)}=(1/\sqrt{N})\Vert A\Vert_{HS},%
\end{equation}
up to a relative deviation $O(1/\sqrt{N})$,
while the cosine of the angle between $v$ and $Tv$ is, with almost full probability,
near (with deviation $O(1/\sqrt{N})$)%
\begin{equation}
\dfrac{(1/N)\operatorname*{tr}\,A}{(1/\sqrt{N})\Vert A\Vert_{HS}}
=\dfrac{(1/N)\operatorname*{tr}\,A}{\Vert(s_1,s_2,\ldots,s_N)\Vert_2^{(\pi)}},
\end{equation}
which, if $A$ is positive-semidefinite, is equal to%
\begin{equation}
\dfrac{\Vert(s_1,s_2,\ldots,s_{N})\Vert_1^{(\pi)}}%
{\Vert(s_1,s_2,\ldots,s_{N})\Vert_2^{(\pi)}}.
\end{equation}
Otherwise the distribution of $Au$ is invariant w.r.t.\ rotations in the hyperplane orthogonal to $u$.
\end{conclusion}

For a product $A_{M}A_{M-1}\cdots A_1$, of a sequence of \emph{symmetric} operators $A_i$
with given eigenvalues $(s_1^{(i)},s_2^{(i)},\ldots,s_N^{(i)})$\,\,
$A=U^{\prime}\operatorname*{diag}(s_1,s_2,\ldots,s_{N})U$ with
the $U$ independently uniformly distributed on the orthogonal group,
employ the Markov chain in \S\ref{sc:Sphere}, with independent
uniform distributions on the spheres, and one has by Conclusion \ref{conc:SphereVectors},

\begin{conclusion}
\label{conc:ProdMat} For a product $A_{M}A_{M-1}\cdots A_1$, of a sequence
of \emph{symmetric} operators $A_{i}$ with given eigenvalues
$(s_1^{(i)},s_2^{(i)},\ldots,s_N^{(i)})$, the cosine of the angle between
$v$ and $A_{M}A_{M-1}\cdots A_1v$, is, with almost full probability, near
(with deviation $O(\sqrt{M}/\sqrt{N})$)%
\begin{equation}
\prod_{i=1}^{M}\dfrac{(1/N)\operatorname*{tr}\,A_{i}}{(1/\sqrt
{N})\Vert A_{i}\Vert_{HS}}
  =\prod_{i=1}^{M}\dfrac{(1/N)\operatorname*{tr}A_{i}}{\Vert(s_1^{(i)},s_2^{(i)},\ldots
,s_{N}^{(i)})\Vert_2^{(\pi)}},
\end{equation}
which, if for all $i,$ $A_i$ is positive semidefinite, is equal to%
\begin{equation}
\prod_{i=1}^{M}\dfrac{\Vert(s_1^{(i)},s_2^{(i)},\ldots,s_N^{(i)})\Vert_1^{(\pi)}}
{\Vert(s_1^{(i)},s_2^{(i)},\ldots,s_N^{(i)})\Vert_2^{(\pi)}}  .\label{eq:ProdMat}%
\end{equation}
\end{conclusion}

\begin{remark}
\label{remark:ProdMat}
Note that if the $A_i$ are positive semidefinite, the value (\ref{eq:ProdMat}) around which the square of the
cosine is concentrated, is nonnegative, that is, the angle between the vectors is $\leq90^{\circ}$.
\bigskip
\end{remark}

\section{Recap: The `Kappa'-Calculations in the Superiorization Article \cite{cl-2020}-- Somewhat `Neater' Formulas}
For the accumulated terms $({\mathbf1}+d\cdot{\mathbf{\kappa}})^{-1}$ along
the path (in what follows we denote by $k$ indices along the path, i.e.,
$k\in\operatorname*{path})$ denote the eigenvalues (here, also the singular
values) of the encountered ${\mathbf{\kappa}}_{k}$ (the curvature operator in
the hyperplane $H$), i.e.,\ the relevant principal curvatures, by
$(\kappa_{\ell}^{(k)})_{\ell},$ for $\ell=1,2,\ldots,N-1$. Then those of
$({\mathbf1}+d\cdot{\mathbf{\kappa}})^{-1}$ are $\left(  (1+d_{k}\cdot
\kappa_{\ell}^{(k)})^{-1}\right)  _{\ell}$, so that, by Conclusion
\ref{conc:ActMat}, and using the $\Vert\cdot\,\Vert_{p}^{(\pi)}$ norm of
Appendix \ref{sc:Norm} below, for $(N-1)$-dimensional vectors, their product
is expected to multiply the norm of the vector they act upon by
\begin{equation}
\prod_{k\in\operatorname*{path}}
\left\Vert\left((1+d_{k}\cdot\kappa_{\ell}^{(k)})^{-1}\right)_{\ell}\right\Vert_2^{(\pi)},\label{eq:Norm}%
\end{equation}
still with relative deviation of the order of at most $O(\sqrt{n-i}/\sqrt{N})$.\pa

By Conclusion \ref{conc:ProdMat}, they are
expected to rotate the direction of the vector, i.e.,\ shift the normalized
vector, by an angle with cosine
\begin{equation}
\prod_{k\in\operatorname*{path}}
\dfrac{\left\Vert \left((1+d_{k}\cdot\kappa_{\ell}^{(k)})^{-1}\right)_{\ell}
\right\Vert_1^{(\pi)}}{\left\Vert \left((1+d_{k}\cdot\kappa_{\ell}^{(k)})^{-1}\right)_{\ell}\right\Vert_2^{(\pi)}}. \label{eq:Devi}%
\end{equation}
with relative deviation of the order of at most $O(\sqrt{n-i}/\sqrt{N})$.\pa

Observe that $\Vert\,\cdot\Vert_1^{(\pi)}$ $\leq$
$\Vert\,\cdot\Vert_2^{(\pi)}$ (cf.\ Appendix \ref{sc:Norm}) and, by\ Remark
\ref{remark:ProdMat}, the value of (\ref{eq:Devi}) is always $\leq1$,
meaning angle of rotation $\leq90^{\circ}$. Indeed, in many cases it will be
much less than $90^{\circ}$. For example, for vectors
$\left((1+d_{k}\cdot\kappa_{\ell}^{(k)})^{-1}\right)_{\ell}$ with equal
(resp.\ almost equal) entries (in our case -- either `spherical'\ curvature or when the
$d_{k}\cdot\kappa$ are small), the $\Vert\cdot\,\Vert_2^{(\pi)}$ norm will
be equal (resp.\ almost equal) to the $\Vert\cdot\,\Vert_1^{(\pi)}$ norm,
hence the terms in the product in (\ref{eq:Devi}) will be near $1$.\pa

Both (\ref{eq:Norm}) and (\ref{eq:Devi}) refer to the $(N-1)$-dimensional
vectors $v=((1+d_{k}\cdot\kappa_{\ell}^{(k)})^{-1})_{\ell}$, having entries in
$(0,1]$. In (\ref{eq:Norm}), which controls how much the norm was reduced, we
have the product of $\Vert v\Vert_2^{(\pi)}$. In (\ref{eq:Devi}), which
controls how much the direction was rotated, we have the product of
$\Vert v\Vert_1^{(\pi)}/\Vert v\Vert_2^{(\pi)}$.

\begin{proposition}
For an $(N-1)$-dimensional vector $v=(v_{\ell})_{\ell}$ with components
$v_{\ell}\in(0,1]$, we have
\begin{equation}
\left(\Vert v\Vert_2^{(\pi)}\right)^2\leq\Vert v\Vert_1^{(\pi)}%
\leq\frac12\left(\left(\Vert v\Vert_2^{(\pi)}\right)^2+1\right).
\end{equation}
\end{proposition}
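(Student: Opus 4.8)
The plan is to reduce the claimed chain of inequalities between the normalized norms to a single pointwise inequality in one real variable, and then average. Recall from Appendix \ref{sc:Norm} that for an $n$-dimensional vector $w=(w_\ell)_\ell$ the normalized $p$-norm is $\Vert w\Vert_p^{(\pi)}=\bigl(\tfrac1n\sum_{\ell=1}^n|w_\ell|^p\bigr)^{1/p}$; here $n=N-1$ and all $v_\ell>0$, so $\bigl(\Vert v\Vert_2^{(\pi)}\bigr)^2=\tfrac1{N-1}\sum_\ell v_\ell^2$ and $\Vert v\Vert_1^{(\pi)}=\tfrac1{N-1}\sum_\ell v_\ell$. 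Hence the assertion is exactly
\[
\frac1{N-1}\sum_{\ell}v_\ell^2\ \le\ \frac1{N-1}\sum_{\ell}v_\ell\ \le\ \frac12\Bigl(\frac1{N-1}\sum_{\ell}v_\ell^2+1\Bigr),
\]
and since each quantity is the empirical average (over $\ell\in\{1,\dots,N-1\}$) of a function of the single entry $v_\ell$, it suffices to establish the corresponding inequalities coordinate by coordinate.

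For the left inequality I would note that for every $t\in(0,1]$ one has $t^2\le t$, i.e.\ $t(1-t)\ge0$; summing over $\ell$ and dividing by $N-1$ gives $\bigl(\Vert v\Vert_2^{(\pi)}\bigr)^2\le\Vert v\Vert_1^{(\pi)}$. For the right inequality I would use that for every real $t$, $(1-t)^2\ge0$, i.e.\ $2t\le t^2+1$; averaging over $\ell$ yields $2\Vert v\Vert_1^{(\pi)}\le\bigl(\Vert v\Vert_2^{(\pi)}\bigr)^2+1$, which rearranges to the stated upper bound. One sees along the way that the right inequality in fact requires no constraint on the $v_\ell$, whereas the left one uses precisely $v_\ell\in(0,1]$.

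There is essentially no genuine obstacle: the only point needing care is correctly unpacking the definition of $\Vert\cdot\Vert_p^{(\pi)}$ — the normalization by $n=N-1$, and the fact that $v_\ell>0$ lets one drop the absolute values in the $1$-norm — after which the statement is just the average of the elementary quadratic inequality $t^2\le t\le\tfrac12(t^2+1)$ on $[0,1]$. A slightly cleaner rendering would observe that both $t\mapsto t-t^2$ and $t\mapsto\tfrac12(t^2+1)-t=\tfrac12(1-t)^2$ are nonnegative on $[0,1]$, and take expectations with respect to the uniform distribution on $\{1,\dots,N-1\}$, which is exactly the probability measure underlying the $(\pi)$-norms.
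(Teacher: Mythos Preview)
Your proof is correct and follows the same idea as the paper's: reduce both inequalities to pointwise statements about a single coordinate and then average. The left inequality is handled identically. For the right inequality the paper instead writes $1-v_\ell^2=(1-v_\ell)(1+v_\ell)\le 2(1-v_\ell)$ and rearranges, which uses $v_\ell\le1$; your use of $(1-t)^2\ge0$ is a touch cleaner and, as you observe, shows that the upper bound in fact holds without any constraint on the $v_\ell$.
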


\begin{proof}
Since $v_{\ell}\in(0,1]$, one has $v_{\ell}^2\leq v_{\ell}$. Averaging, we
get $\left(  \Vert v\Vert_2^{(\pi)}\right)  ^2\leq\Vert v\Vert_1^{(\pi
)}$. Also, by definition of $\Vert v\Vert_2^{(\pi)}$ for $(N-1)$-dimensional
vectors, see Appendix \ref{sc:Norm},
\begin{eqnarray*}
&& \left(\Vert v\Vert_2^{(\pi)}\right)^2=\dfrac1{N-1}\sum_{\ell=1}^{N-1}v_{\ell}^2
=1-\dfrac1{N-1}\sum_{\ell=1}^{N-1}(1-v_{\ell}^2)\\
&& =1-\dfrac1{N-1}\sum_{\ell=1}^{N-1}(1-v_{\ell})(1+v_{\ell})\geq1-
2\dfrac1{N-1}\sum_{\ell=1}^{N-1}(1-v_{\ell})\\
&& =2\dfrac1{N-1}\sum_{\ell=1}^{N-1}v_{\ell}-1=2\Vert v\Vert_1^{(\pi)}-1.
\end{eqnarray*}
Hence, $\Vert v\Vert_1^{(\pi)}\leq\frac12(\Vert v\Vert_2^{(\pi)})^2+1)$,
which completes the proof.
\end{proof}

As a consequence of this proposition we have,
\begin{eqnarray*}
&& \dfrac{\Vert v\Vert_1^{(\pi)}}{\Vert v\Vert_2^{(\pi)}}\geq
\dfrac{(\Vert v\Vert_2^{(\pi)})^2}{\Vert v\Vert_2^{(\pi)}}=\Vert v\Vert_2^{(\pi)},\\
&& \dfrac{\Vert v\Vert_1^{(\pi)}}{\Vert v\Vert_2^{(\pi)}}\leq
\frac12\dfrac{(\Vert v\Vert_2^{(\pi)})^2+1}{\Vert v\Vert_2^{(\pi)}}=
\frac12\left(\Vert v\Vert_2^{(\pi)}+\dfrac1{\Vert v\Vert_2^{(\pi)}}\right),
\end{eqnarray*}

So, there is here a \emph{`balancing effect'} -- if the angle of rotation becomes close to $90^{\circ}$
in (\ref{eq:Devi}), then the norm will be reduced considerably in (\ref{eq:Norm}). Thus, when $i$ is such
that $d_{i}$ times a `typical' curvature ${\mathbf{\kappa}}$ (loosely, the ratio between $d$ and a
` typical'\ radius of the $C_{i}$) is still considerably larger than $1$ (maybe while in the early columns
of the superiorization matrix with $i$ small), then, by (\ref{eq:Norm}), the cascade of $DP$ will reduce the norm hugely, hence, anyway applying $\nabla\phi$ then will give a negligible result.\pa

On the other hand, when we reach a stage where $d_{i},d_{i+1},\ldots,d_{n}$ are small, both the possible rotation
and the distance traveled are controlled. But of course, then the decrease of the $\beta_{k}$ should also be
taken into account. For big $i$, thus small $\beta_{i}$, the contribution might again be negligible. This shows that the main contribution in (8) of \cite{cl-2020} seems to come from intermediate terms.\pa

As said above, the angle of rotation, both by the $\alpha$ and by the $\kappa$
seems to be controlled, as long as the number of steps $n$ does not approach
the vector space dimension $N$. If conditions are imposed on the target
function $\phi$ then point (3) above could also be tackled, in view of the
preceeding paragraph, bringing our analysis closer to conclusion.

\appendix
\section{The probability $L^{p}$ norms of vectors\label{sc:Norm}}
For a vector $x\in E^{N}$, and $1\leq p<\infty$, denote by $\Vert\,\cdot\Vert_{p}^{(\pi)}$ ($\pi$ stands
for ` probability space') its $L^{p}$ norm when the set of indices $\{1,2,\ldots,N\}$ is made into a uniform probability space, giving each index a weight $1/N$, namely
\begin{equation}
\Vert x\Vert_{p}^{(\pi)}:=\left(\dfrac1{N}\sum_{j=1}^{N}|x_{j}|^{p}\right)^{1/p}, \label{eq:Lp}%
\end{equation}
see, e.g., \cite{song97}. As with any probability measure, always $\Vert\,\cdot\Vert_{p}^{(\pi)}$ increases with $p$.
\pa
For $x_1,x_2,\ldots,x_{N}$ i.i.d.\ $\sim{\mathcal{N}}$, $\left(\Vert x\Vert_{p}^{(\pi)}\right)^{p}$ is an average:
its expectation ${\mathbb{E}}$ will be the same as the expectation of $|x|^{p}$ for $x$ a scalar distributed
$\sim{\mathcal{N}}$:
\begin{equation}
{\mathbb{E}}\left[|x|^{p}\right]=\dfrac1{\sqrt{2\pi}}\int|x|^{p}%
\operatorname{exp}(-\textstyle{\frac12}x^2)\,dx,
\end{equation}
but its standard deviation will be $1/\sqrt{N}$ that of $|x|^{p}$ for a scalar
$\sim{\mathcal{N}}$:
\begin{equation}
\dfrac1{\sqrt{N}}\dfrac1{\sqrt{2\pi}}
\int\left(|x|^{p}-\mathbb{E}\left[|y|^{p}\right]\right)^2
\operatorname{exp}(-\textstyle{\frac12}x^2)\,dx.
\end{equation}
Thus, $\Vert x\Vert_{p}^{(\pi)}$ is highly concentrated around the, not
depending on $N$, $\left({\mathbb{E}}\left[|x|^{p}\right]\right)^{1/p}$
with degree of concentration $O(1/\sqrt{N})$.

One may conclude, loosely speaking, that in any case, these $\Vert\,\cdot
\Vert_{p}^{(\pi)}$ norms, having not depending on $N$ means, are expected to
be $O(1)$, for all $N$.

\section{The integrals over $S^{N-1}\subset\mathbb{R}^{N}$ of monomials of degree $\leq4$ of $x_1,x_2,\ldots,x_N$} \label{sc:IntMon}
Clearly these are $0$ if some power of some $x_i$ is odd. The integral of $x_i^2$ is $\frac1{N}$ (\ref{eq:integral}). So we are left with $\int x_i^4$ and $\int x_i^2x_j^2$,\,$i\ne j$, both, by symmetry, the same for all relevant $i$ and $j$.
\pa
To calculate these, note that
\begin{eqnarray*}
&& 1=\int(x_1^2+\cdots+x_N^2)^2\,d\omega_{N-1}=N\int x_1^4+N(N-1)\int x_1^2x_2^2\quad\textrm{(*)}.\\
&& \textrm{A rotation by }45^{\circ}\textrm{ in the }x_1,x_2\textrm{ plane transforms}\\
&& x_1\to(1/\sqrt2)(x_1+x_2)\textrm{ and }x_2\to(1/\sqrt2)(x_1-x_2),\\
&& \textrm{ hence }x_1x_2\to\frac12(x_1^2-x_2^2)\textrm{ thus }
x_1^2x_2^2\to\frac14(x_1^2-x_2^2)^2.\textrm{ Therefore}\\
&& \int x_1^2x_2^2=\frac14\int(x_1^2-x_2^2)^2=\frac14\left(2\int x_1^4-2\int x_1^2x_2^2\right).\\
&& \textrm{Consequently }\int x_1^4=3\int x_1^2x_2^2.\,\,\textrm{And finally, by (*)},\\
&& \int x_1^2x_2^2\,d\omega_{N-1}=\frac1{N(N+2)},\qquad\int x_1^4\,d\omega_{N-1}=\frac{3}{N(N+2)}.
\end{eqnarray*}

\section{In the Hyperbolic Space}
Having discussed the sphere, it would be illuminating to do the same for the \emph{hyperbolic space}, proceeding with much analogy.
\pa
Model the $N-1$-dimensional hyperbolic space as the subset ${\mathbf{H}}^{N-1}$, of an $N$-dimensional `Minkowski' space with signature $(+,-,\cdots,-)$, consisting of `future' vectors (in the sense of Relativity Theory) with norm $1$, i.e.\ the half of the hyperboloid $x_1^2-x_2^2-\ldots -x_N^2=1$ that lies in the half-space $x_1>0$.
\pa
Denote by $\langle\,,\,\rangle_\mathbf{H}$ the `Minkowski' inner product
$\langle x,y \rangle_{\mathbf{H}}:=x_1y_1-x_2y_2-\ldots-x_Ny_N$, thus
$\Vert x\Vert^2_\mathbf{H}=\langle x,x \rangle_{\mathbf{H}}=x_1^2-x_2^2-\ldots -x_N^2$,
while keeping the notation $\langle x,y \rangle=x_1y_1+x_2y_2+\ldots+x_Ny_N$.
\pa
The `distance' between vectors $u,v\in{\mathbf{H}}^{N-1}$, i.e.\ the distance along a straight line -- a geodesic, is the \emph{`hyperbolic arc'} $\xi$ defined by $\cosh\xi=\langle u,v\rangle_\mathbf{H}$.
\pa
And as above we wish to find what should we expect the accumulated hy-arc of a path made of $M$ steps of given hy-arcs, $\xi_1,\xi_2,\ldots.\xi_M$, to be.
\pa
Here again, for some fixed $\xi\geq 0$, the set of points in ${\mathbf{H}}^{N-1}$ of
hy-arc $\xi$ from some fixed vector $u\in{\mathbf{H}}^{N-1}$ is an $(N-2)$-sphere
$\subset{\mathbf{H}}^{N-1},$ $\Sigma(u,\xi)$ given by
\begin{equation}
\Sigma(u,\xi):=\cosh\xi\cdot u+\sinh\xi\cdot{S^{N-2}}_{u^{\bot}},\label{eq:hy}%
\end{equation}
where ${S^{N-2}}_{u^{\bot}}$ stands for the unit sphere in the hyperplane perpendicular to $u.$
(on which the induced  geometry is negative-squared-norm Euclidean, not hyperbolic!)
\pa
Again, one performs a Markov chain. Start from a point $u_0$ on $\mathbf{H}^{N-1}$, move to a point $u_1\in\Sigma(u_0,\xi_1)$ uniformly distributed there, then from that $u_1$ to a point
$u_2\in\Sigma(u_1,\xi_2)$ uniformly distributed there, and so on, until one ends with $u_M$.
We would like to find the expected hyperbolic cosine of the hy-arc between $u_0$ and
$u_M$, namely $\mathbb{E}\left[\langle u_M,u_0\rangle_{\mathbf{H}}\right]$.
\pa
And again, denote by ${\mathcal{L}}_\xi$ the operator mapping a function $p$ on
$S^{N-1}$ to the function whose value at a vector $u\in\mathbf{H}^{N-1}$ is the
average of $p$ on $\Sigma(u,\xi)$, then ${\mathcal{L}}_{\xi_k}(p)$ evaluated at
$u$ is the expectation of $p$ at the point to which $u$ moved in the $k$-th
step above. Hence, in the above Markov chain, the expectation of $p(u_M)$
is
\begin{equation}
\left({\mathcal{L}}_{\xi_M}{\mathcal{L}}_{\xi_{M-1}}\cdots{\mathcal{L}}_{\xi_1}(p)\right)(u_0).
\end{equation}
Thus, what we are interested in is
\begin{equation}
\mathbb{E}\left[\langle u_M,u_{0}\rangle_{\mathbf{H}}\right]  =
{\mathcal{L}}_{\xi_M}{\mathcal{L}}_{\xi_{M-1}}\cdots{\mathcal{L}}_{\xi_1}(\langle x,u_0\rangle_\mathbf{H})\Big\vert_{x=u_0}.
\end{equation}
\pa
So, let us calculate ${\mathcal{L}}_{\xi}(p)$ for polynomials of degree $\leq2$
like in (\ref{eq:poly}).
\pa
\textbf{Care in working with an invariant trace}\par
Here we seek invariance w.r.t.\ the \emph{`Lorentz group'} -- linear transformations that preserve the `Minkowski' norm -- so we must be careful in speaking about trace when
referring to a quadratic form.
\pa
Indeed, let
$$p(x)=\langle Qx,x\rangle+2\langle a,x\rangle_\mathbf{H}+\gamma.$$
An invariant trace $\operatorname*{tr}_{\mathbf{H}}Q$ here would be a linear mapping from the bilinear forms to the scalars, which maps a bilinear form of rank $1$
\begin{equation} \label{eq:BilFrm}
(x,y)\mapsto\langle x,a\rangle_{\mathbf{H}}\cdot\langle y,b\rangle_{\mathbf{H}}
\end{equation}
to $\langle a,b\rangle_\mathbf{H}$.
\pa
In trying to write (\ref{eq:BilFrm}) as $\langle Qx,y\rangle$, denote
$$\tilde{x}:=(x_2,x_3,\ldots,x_N)\in E^{N-1}.$$
then $x=(x_1,\tilde{x})$, and $\langle x,y\rangle=x_1y_1-\langle\tilde{x},\tilde{y}\rangle.$
Our bilinear form (\ref{eq:BilFrm}) will be
$$(x,y)\mapsto \left(a_1x_1-\langle\tilde{a},\tilde{x}\rangle\right)\cdot
\left(b_1y_1-\langle\tilde{a},\tilde{y}\rangle\right)=\langle Qx,y\rangle$$
where
\begin{equation}
Q=\left(
\begin{array}
[c]{cc}%
a_1b_1 & -b_1\tilde{a}^{\prime}\\
-a_1\tilde{b} & \tilde{b}\cdot\tilde{a}^{\prime}%
\end{array}
\right).
\end{equation}
And our $\operatorname*{tr}_{\mathbf{H}}$ should map it to
$$\langle a,b\rangle_{\mathbf{H}}=a_1b_1-\langle\tilde{a},\tilde{b}\rangle$$
Which mandates
\begin{equation}\label{eq:hyTrace}
{\operatorname*{tr}}_{\mathbf{H}}%
\left(
\begin{array}
[c]{cc}%
q_{11} & c^{\prime}\\
d & Q_1%
\end{array}
\right)%
:=q_{11}-{\operatorname*{tr}}Q_1.
\end{equation}

Returning to calculating ${\mathcal{L}}_{\xi}(p)$ for
$$p(x)=\langle Qx,x\rangle+2\langle a,x\rangle_{\mathbf{H}}+\gamma,$$
i.e.\ its value at some $u\in\mathbf{H}^{N-1}$, we may again, in performing the calculation,
assume $u=(1,0,\ldots,0)$, and $Q$ as in (\ref{eq:hyTrace}).
\pa
Note that for our $u=(1,0,\ldots,0)$,\,\,\,
$a_1=\langle a,u\rangle_{\mathbf{H}}$,\,\,\,
$q_{11}=\langle Qu,u\rangle$ and
${\operatorname*{tr}}\,Q_1=q_{11}-{\operatorname*{tr}}_{\mathbf{H}}Q=
\langle Qu,u\rangle-{\operatorname*{tr}}_{\mathbf{H}}Q$. And we have
\begin{eqnarray*}
&& p(x)=\langle Qx,x\rangle+2\langle a,x\rangle_{\mathbf{H}}+\gamma\\
&& = q_{11}x_1^2+\langle c+d,\tilde{x}\rangle+\langle Q_1\tilde{x},\tilde{x}\rangle
+2a_1x_1-2\langle\tilde{a},\tilde{x}\rangle+\gamma
\end{eqnarray*}
Hence, taking account of (\ref{eq:hy}) for $u=(1,0,\ldots,0)$, and using
(\ref{eq:integral}),
\begin{eqnarray*}
&& \left({\mathcal{L}}_\xi p\right)(u)=\left({\mathcal{L}}_\xi p\right)(1,0,\ldots,0)=\\
&& =\cosh^2\xi\cdot q_{11}+\frac1{N-1}\sinh^2\xi\cdot{\operatorname*{tr}}\,Q_1+
2\cosh\xi\cdot a_1+\gamma=\\
&& =\cosh^2\xi\cdot\langle Qu,u\rangle+
\frac1{N-1}\sinh^2\xi\cdot\left(\langle Qu,u-{\operatorname*{tr}}_{\mathbf{H}}Q\rangle\right)
 +2\cosh\xi\cdot\langle a,u\rangle_{\mathbf{H}}+\gamma\\
&& =\left(\cosh^2\xi+\frac1{N-1}\sinh^2\xi\cdot\right)\langle Qu,u\rangle
 -\frac1{N-1}\sinh^2\xi\cdot{\operatorname*{tr}}_{\mathbf{H}}Q+
 2\cosh\xi\cdot\langle a,u\rangle_{\mathbf{H}}+\gamma.
\end{eqnarray*}
which, by symmetry, will hold for any $u\in{\mathbf{H}}^{N-1}$.
\pa
We are interested, for some fixed $u\in{\mathbf{H}}^{N-1}$, in
\begin{equation}
p(x)=\langle u,x\rangle_\mathbf{H}.
\end{equation}
Then there is no $Q$ term, so one has
\begin{equation}
\Big({\mathcal{L}}_\xi\big(\langle u,x\rangle_{\mathbf{H}}\big)\Big)(u)=
\cosh\xi\cdot\langle u,x\rangle_{\mathbf{H}}.
\end{equation}
Consequently,
\begin{eqnarray*}
&&  {\mathbb{E}}\left[\langle u_M,u_0\rangle_{\mathbf{H}}\right] =
\left({\mathcal{L}}_{\xi_{M}}{\mathcal{L}}_{\xi_{M-1}}\cdots{\mathcal{L}}_{\xi_1}
(\langle x,u_0\rangle)_\mathbf{H}\right)\vert_{x=u_0}=\\
&& =\prod_{i=1}^M\cosh\xi_i\cdot(\langle u_0,x\rangle_{\mathbf{H}})\Big\vert_{x=u_0}=
\prod_{i=1}^{M}\cosh\xi_i.
\end{eqnarray*}
\pa
And we also assess the standard deviation, which is
\begin{equation}
\sigma=\sqrt{\left({\mathcal{L}}_{\xi_{M}}{\mathcal{L}}_{\xi_{M-1}}\cdots
{\mathcal{L}}_{\xi_1}\right)(\langle u_0,x\rangle_\mathbf{H}^2)\Big\vert_{x=u_0}
-\left(\prod_{i=1}^M\cosh\xi_i\,\right)^2}.
\end{equation}
Here $p(x)$ is of the form $\langle c,x\rangle_\mathbf{H}^2$, and by symmetry we assume
$c=(c_1,0,\ldots,0)$ making $p(x)=c_1^2x_1^2$. So there is only the $Q$ term with
$\langle Qx,x\rangle:=c_1^2x_1^2$. Then
${\operatorname*{tr}}_{\mathbf{H}}\,Q=c_1^2=\Vert c\Vert^2_\mathbf{H}$,
which by symmetry holds for general $c$. And we find
\begin{eqnarray*}
&& \Big({\mathcal{L}}_{\xi}\big(\langle c,x\rangle_\mathbf{H}^2\big)\Big)(u)=\\
&& \left(\cosh^2\xi+\frac1{N-1}\sinh^2\xi\right)\langle c,u\rangle_\mathbf{H}^2-
\frac1{N-1}\sinh^2\xi\cdot\Vert c\Vert^2_\mathbf{H}.
\end{eqnarray*}
Consequently, for $c=u_0$ (note $\Vert u_0\Vert^2_\mathbf{H}=1$),
\begin{eqnarray*}
&& \sigma^2=\left({\mathcal{L}}_{\xi_M}{\mathcal{L}}_{\xi_{M-1}}\cdots
{\mathcal{L}}_{\xi_1}\right)(\langle u_0,x\rangle^2)\Big\vert_{x=u_0}-
\left(\prod_{i=1}^M\cosh\xi_i\right)^2=\\
&& =-\left(\prod_{i=1}^M\cosh\xi_i\right)^2+
\prod_{i=1}^M\left(\cosh^2\xi_i+\frac1{N-1}\sinh^2\xi_i\right)\\
&& -\frac1{N-1}\left[\sinh^2\xi_1
+\sinh^2\xi_2\left(\cosh^2\xi_1+\frac1{N-1}\sinh^2\xi_1\right)\right.\\
&& +\sinh^2\xi_3\left(\cosh^2\xi_2+\frac1{N-1}\sinh^2\xi_2\right)
\left(\cosh^2\xi_1+\frac1{N-1}\sinh^2\xi_1\right)\\
&& \left.+\cdots+\sinh^2\xi_M\cdot\prod_{i=1}^{M-1}\left(\cosh^2\xi_i+\frac1{N-1}\sinh^2\xi_i\right)\right],
\end{eqnarray*}
\begin{eqnarray*}
&&=-\left(\prod_{i=1}^M\cosh\xi_i\right)^2
+\prod_{i=1}^M\left(1+\frac{N}{N-1}\sinh^2\xi_i\right)\\
&& -\frac1{N-1}\left[\sinh^2\xi_1
+\sinh^2\xi_2\left(1+\frac{N}{N-1}\sinh^2\xi_1\right)\right.\\
&& +\sinh^2\xi_3\left(1+\frac{N}{N-1}\sinh^2\xi_2\right)
\left(1+\frac{N}{N-1}\sinh^2\xi_1\right)\\
&& \left.+\cdots+\sinh^2\xi_M\cdot\prod_{i=1}^{M-1}\left(1+\frac{N}{N-1}\sinh^2\xi_i\right)\right],
\end{eqnarray*}
\begin{eqnarray*}
&& =-\left(\prod_{i=1}^M\cosh\xi_i\right)^2
+\sum_{S\subset\{1,\ldots,M\}}\left(\frac{N}{N-1}\right)^{\#\,S}\prod_{i\in S}\sinh^2\xi_i\\
&& -\frac1{N-1}\sum_{\emptyset\ne S\subset\{1,\ldots,M\}}\left(\frac{N}{N-1}\right)^{\#\,S-1}
\prod_{i\in S}\sinh^2\xi_i\\
&& =-\prod_{i=1}^M\left(1+\sinh^2\xi_i\right)+1
+\sum_{\emptyset\ne S\subset\{1,\ldots,M\}}\left(\frac{N}{N-1}\right)^{\#\,S-1}
\prod_{i\in S}\sinh^2\xi_i\\
&&=\sum_{S\subset\{1,\ldots,M\},\,\# S\ge2}\left[\left(\frac{N}{N-1}\right)^{\#\,S-1}-1\right]
\prod_{i\in S}\sinh^2\xi_i.
\end{eqnarray*}
Thus
\begin{eqnarray*}
&& \sigma^2\le\sum_{S\subset\{1,\ldots,M\}}\left(\left(\frac{N}{N-1}\right)^{M-1}-1\right)
\prod_{i\in S}\sinh^2\xi_i\\
&& =\left[\left(\frac{N}{N-1}\right)^{M-1}-1\right]
\prod_{i=1}^M\left(1+\sinh^2\xi_i\right)=
\left[\left(\frac{N}{N-1}\right)^{M-1}-1\right]\prod_{i=1}^M\cosh^2\xi_i.\\
&& \sigma\le\sqrt{\left[\left(\frac{N}{N-1}\right)^{M-1}-1\right]}\prod_{i=1}^M\cosh\xi_i.
\end{eqnarray*}

\begin{conclusion}\label{conc:HyVectors}
In the hyperbolic space $\mathbf{H}^{N-1}$, the hyperbolic cosine of the `hyperbolic arc' $\xi$ made by $M$ moves of given `hyperbolic arcs' $\xi_1,\xi_2,\ldots,\xi_M$, modeled by the above Markov chain, is with almost full probability, near%
\begin{equation}\label{eq:HyVectors}
\prod_{i=1}^{M}\cosh\xi_i.
\end{equation}
With relative deviation $O\left(\sqrt{M}/\sqrt{N}\right)$.)
\end{conclusion}
\bigskip
\textbf{What does conclusion (\ref{conc:HyVectors}) tell us?}
Firstly, as with the sphere (\S\ref{sc:SphereVectors}), (\ref{eq:HyVectors}) agrees with the `flat' case \S\ref{sc:Sum} when the $\xi$'s are small: here $\cosh\xi\sim 1+\frac12\xi^2$ and again multiplying
these corresponds approximately to adding the $\xi^2$'s.
\pa
But when the $\xi$ are large, $\cosh\xi\sim\frac12e^{\xi}$ and multiplying
these corresponds approximately to adding the $\xi$'s themselves, and subtracting $\log_2M$.
\pa
So, in the hyperbolic space, for large $\xi$ `the distances combine as if they were all about on the same line'.

\end{document}